\documentclass[preprint,3p,sort]{elsarticle}


\usepackage{amsfonts}
\usepackage{amsthm}
\usepackage{graphicx}
\usepackage{epstopdf}
\usepackage{algorithmic}
\usepackage{amsmath}
\usepackage{amssymb}
\usepackage{hyperref}
\usepackage[T1]{fontenc}
\usepackage{soul,color}
\usepackage{cases}
%


\newtheorem{theorem}{Theorem}[section]
\newtheorem{proposition}[theorem]{Proposition}
\newtheorem{lemma}[theorem]{Lemma}

\newtheorem{definition}{Definition}[section]
\theoremstyle{remark}
\newtheorem{remark}[theorem]{Remark}
\theoremstyle{example}
\newtheorem{example}[theorem]{Example}


\numberwithin{equation}{section}
\begin{document}


\begin{frontmatter}

%

\title{Nonlinear Boundary Conditions for Energy and Entropy Stable Initial Boundary Value Problems in Computational Fluid Dynamics}

\author[sweden,southafrica]{Jan Nordstr\"{o}m}
\cortext[secondcorrespondingauthor]{Corresponding author}
\ead{jan.nordstrom@liu.se}
\address[sweden]{Department of Mathematics, Applied Mathematics, Link\"{o}ping University, SE-581 83 Link\"{o}ping, Sweden}
\address[southafrica]{Department of Mathematics and Applied Mathematics, University of Johannesburg, P.O. Box 524, Auckland Park 2006, Johannesburg, South Africa}

\begin{abstract}
We derive new boundary conditions and implementation procedures for nonlinear initial boundary value problems that lead to energy and entropy bounded solutions. A  step-by-step procedure for general nonlinear hyperbolic problems on skew-symmetric form is presented. That procedure is subsequently applied to the three most important equations in computational fluid dynamics: the shallow water equations and the incompressible and compressible Euler equations. Both strong and weak imposition of the nonlinear boundary conditions are discussed. Based on the continuous analysis, we show that the new nonlinear boundary procedure lead to energy and entropy stable discrete approximations if the scheme is formulated on summation-by-parts form in combination with a weak implementation of the boundary conditions.
\end{abstract}

\begin{keyword}
Nonlinear boundary conditions \sep computational fluid dynamics \sep Euler equations \sep shallow water equations  \sep  energy and entropy stability  \sep summation-by-parts
\end{keyword}


\end{frontmatter}


\section{Introduction}

In this paper we will complete the general stability theory for nonlinear hyperbolic initial boundary value problems (IBVPs) partly developed in \cite{nordstrom2022linear-nonlinear,Nordstrom2022_Skew_Euler}. This theory is valid for both linear and nonlinear primal and dual problems. It is direct, easy to understand and leads to $L_2 $ estimates. The requirement for an energy and entropy bound is that  {\it i)} a skew-symmetric form of the governing equations exist and {\it ii)} energy bounding boundary conditions (BCs) are available. In  \cite{nordstrom2022linear-nonlinear, Nordstrom2022_Skew_Euler} we focused on the skew-symmetric property {\it assuming}  that boundary conditions leading to an energy bound were available. In this article we derive these BCs explicitly, and show how to implement them in a provable stable way. We exemplify the procedure for the most important equations in computational fluid dynamics (CFD): the shallow water equations (SWEs), the incompressible Euler equations (IEEs) and the compressible Euler equations (CEEs).

It was shown in \cite{nordstrom2022linear-nonlinear} that the original form of the velocity-divergence form of the IEEs equations had the required skew-symmetric form and that it could be derived for the SWEs. In  \cite{Nordstrom2022_Skew_Euler} we showed that also the CEEs could be transformed to skew-symmetric form. It was also shown that the new skew-symmetric formulation allows for a mathematical (or generalised) entropy conservation and bound. Once the skew-symmetric formulation is obtained, an energy and entropy bound follows by applying integration-by-parts (IBP) and imposing proper boundary conditions.  The continuous procedure was reused by discretising the equations in space using summation-by-parts (SBP) operators \cite{svard2014review,fernandez2014review} which discretely mimic the IBP procedure.  To derive the stable boundary procedures that was {\it assumed} to exist in \cite{nordstrom2022linear-nonlinear, Nordstrom2022_Skew_Euler} is the topic of this paper. As in the previous papers, it is shown that the key to stability is found in the continuous formulation.

Skew-symmetric formulations for parts or the whole set of governing flow equations have drawn interest previously \cite{Petr2007,Rozema2014,Sesterhenn2014,Halpern2018} where fragments of the general theory in \cite{nordstrom2022linear-nonlinear,Nordstrom2022_Skew_Euler} was included. Nonlinear boundary conditions were not discussed. With few exceptions, only boundary conditions for solid walls (or glancing boundaries) have been considered previously as for example in \cite{svard2014,parsani2015entropy,svar2018,svard2021,chan2022,Gjesteland2022}. Solid wall boundary conditions are notoriously simple and straightforward to implement due to their homogeneous nature, i.e. no external non-zero data must be considered. In contrast to the previous investigations, we will for the first time (to the best of our knowledge) treat the general case with non-homogeneous nonlinear boundary conditions and derive estimates of the solution in terms of given non-zero boundary data.


The remaining part of paper is organised as follows: In Section~\ref{sec:theory} we reiterate and complement the main theoretical findings in  \cite{nordstrom2022linear-nonlinear,Nordstrom2022_Skew_Euler} and outline the general procedure for obtaining energy and entropy bounds. The remaining key ingridient: how to formulate and impose general nonlinear boundary conditions, is presented in Section~\ref{BC_theory}. In Section~\ref{Examples}, we show that the most important IBVPs in CFD: the SWEs, the IEEs and the CEEs can be described by the new general theoretical framework. Explicit examples of boundary conditions and implementation procedures are given for all three cases. In Section~\ref{numerics} we return to the general formulation and show that the energy and entropy bounded continuous formulation lead to  nonlinear stability of the SBP based semi-discrete scheme, including non-zero boundary data. A summary is provided in Section~\ref{sec:conclusion}.

\section{Nonlinear energy and entropy boundedness: the governing equations}\label{sec:theory}

Following \cite{nordstrom2022linear-nonlinear,Nordstrom2022_Skew_Euler}, we consider the following general hyperbolic IBVP
\begin{equation}\label{eq:nonlin}
P U_t + (A_i(V) U)_{x_i}+B_i(V)U_{x_i}+C(V)U=0,  \quad t \geq 0,  \quad  \vec x=(x_1,x_2,..,x_k) \in \Omega
\end{equation}
augmented with the initial condition $U(\vec x,0)=F(\vec x)$ in $\Omega$ and  the non-homogeneous boundary condition
\begin{equation}\label{eq:nonlin_BC}
L(V) U = g(\vec x,t),  \quad t \geq 0,  \quad  \vec x=(x_1,x_2,..,x_k) \in  \partial\Omega.
\end{equation}
In (\ref{eq:nonlin_BC}), $L$ is the boundary operator and $g$ the boundary data. In (\ref{eq:nonlin}), Einsteins summation convention is used and $P$ is a symmetric positive definite (or semi-definite) time-independent matrix that defines an energy norm (or semi-norm) $\|U\|^2_P= \int_{\Omega} U^T P U d\Omega$. We assume that $U$ and $V$ are smooth. The $n \times n$ matrices $A_i,B_i,C$ are smooth functions of the $n$ component vector $V$, but otherwise arbitrary. Note that (\ref{eq:nonlin}) and (\ref{eq:nonlin_BC}) encapsulates both linear ($V \neq U$) and nonlinear  ($V=U$) problems. 


\begin{definition}
Firstly, the problem (\ref{eq:nonlin}) is energy conserving if $\|U\|^2_P= \int_{\Omega} U^T P U d\Omega$ only changes due to boundary effects. Secondly, it is energy bounded if $\|U\|^2_P  \leq \|F\|^2_P$ for a minimal number of homogeneous $(g=0)$ boundary conditions (\ref{eq:nonlin_BC}). 
Thirdly, it is strongly energy bounded if $\|U\|^2_P  \leq \|F\|^2_P+  \int_0^t (\oint G^TG \ ds) dt$ for a minimal number of non-homogeneous $(g \neq 0)$ boundary conditions (\ref{eq:nonlin_BC}), where $G=G(g,\vec x,t)$.
\end{definition}

\begin{proposition}\label{lemma:Matrixrelation}
The IBVP  (\ref{eq:nonlin}) for linear ($V \neq U$) and nonlinear  ($V=U$)  is energy conserving if
\begin{equation}\label{eq:boundcond}
B_i= A_i^T, \quad i=1,2,..,k \quad \text{and } \quad C+C^T = 0
\end{equation}
holds. It is energy bounded if it is energy conserving and the boundary conditions (\ref{eq:nonlin_BC}) for $g=0$ lead to
\begin{equation}\label{1Dprimalstab}
\oint\limits_{\partial\Omega}U^T  (n_i A_i)   \\\ U \\\ ds = \oint\limits_{\partial\Omega} \frac{1}{2} U^T ((n_i A_i)  +(n_i A_i )^T) U \\\ ds \geq 0.
\end{equation}
It is strongly energy bounded if it is energy conserving and the boundary conditions (\ref{eq:nonlin_BC}) for $g \neq 0$ lead to
\begin{equation}\label{1Dprimalstab_strong}
\oint\limits_{\partial\Omega}U^T  (n_i A_i)   \\\ U \\\ ds = \oint\limits_{\partial\Omega} \frac{1}{2} U^T ((n_i A_i)  +(n_i A_i )^T) U \\\ ds \geq - \oint\limits_{\partial\Omega} G^TG \\\ ds,
\end{equation}
where $G=G(g,\vec x,t)$ is independent of the solution $U$.
\end{proposition}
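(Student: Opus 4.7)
The plan is to derive the energy identity by directly multiplying \eqref{eq:nonlin} on the left by $U^T$, integrating over $\Omega$, and using the structural hypotheses \eqref{eq:boundcond} to cancel the interior contributions, leaving only a boundary term that is then estimated via \eqref{1Dprimalstab} or \eqref{1Dprimalstab_strong}.

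First I would handle the time and zeroth-order terms. Since $P$ is symmetric, positive (semi-)definite, and time-independent, integration of $U^T P U_t$ over $\Omega$ gives exactly $\tfrac{1}{2}\frac{d}{dt}\|U\|_P^2$. The term $\int_\Omega U^T C U \, d\Omega$ equals $\int_\Omega \tfrac{1}{2} U^T(C+C^T) U \, d\Omega$ because its integrand is a scalar, so the assumption $C+C^T=0$ makes it vanish. This reduces the energy identity to a statement purely about the spatial flux and convective terms.

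Next I would attack the spatial terms with integration by parts. The divergence theorem yields
\begin{equation*}
\int_\Omega U^T (A_i U)_{x_i}\, d\Omega = \oint_{\partial\Omega} n_i U^T A_i U \, ds - \int_\Omega U^T_{x_i} A_i U \, d\Omega.
\end{equation*}
Because the interior integrand $U_{x_i}^T A_i U$ is a scalar it equals its own transpose $U^T A_i^T U_{x_i}$. Invoking $B_i = A_i^T$ from \eqref{eq:boundcond}, this is precisely $U^T B_i U_{x_i}$, so adding back the $B_i U_{x_i}$ contribution in \eqref{eq:nonlin} cancels the interior remainder exactly. What survives is the clean identity
\begin{equation*}
\frac{1}{2}\frac{d}{dt}\|U\|_P^2 + \oint_{\partial\Omega} n_i\, U^T A_i U \, ds = 0,
\end{equation*}
and symmetrising the boundary integrand (again using that a scalar equals its transpose) recasts it as $\oint_{\partial\Omega} \tfrac{1}{2} U^T((n_iA_i)+(n_iA_i)^T) U \, ds$. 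This proves energy conservation, because the norm only changes through boundary contributions, and it holds whether $V=U$ or $V\neq U$ since the structural conditions were imposed pointwise.

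Finally I would obtain the two bounds as immediate consequences. Under the minimal set of homogeneous BCs \eqref{eq:nonlin_BC} with $g=0$, hypothesis \eqref{1Dprimalstab} turns the identity into $\tfrac{d}{dt}\|U\|_P^2 \leq 0$, and integration in time from $0$ to $t$ gives $\|U\|_P^2 \leq \|F\|_P^2$. In the non-homogeneous case, \eqref{1Dprimalstab_strong} gives $\tfrac{d}{dt}\|U\|_P^2 \leq 2\oint_{\partial\Omega} G^T G \, ds$, and integration in time produces a bound of the required strong form (with an irrelevant factor that can be absorbed into $G$). The main (and essentially only) subtlety is the scalar-transpose observation that lets the $(A_i U)_{x_i}$ and $A_i^T U_{x_i}$ contributions collapse to a pure boundary term; everything else is a bookkeeping exercise.
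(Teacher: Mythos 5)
Your proposal is correct and follows essentially the same route as the paper: the energy method (multiply by $U^T$, integrate, apply the divergence theorem and the scalar-transpose identity) reduces everything to the identity $\tfrac{1}{2}\tfrac{d}{dt}\|U\|_P^2 + \oint_{\partial\Omega} U^T(n_iA_i)U\,ds = \int_\Omega (U_{x_i}^TA_iU - U^TB_iU_{x_i})\,d\Omega - \int_\Omega U^TCU\,d\Omega$, whose right-hand side is cancelled by the structural conditions, after which the bounds follow by time integration. Your explicit handling of the factor of $2$ in the inhomogeneous case (absorbing it into $G$) is a fair reading of the paper's definition of strong boundedness and matches the factor that appears explicitly in the discrete estimate.
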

\begin{proof}
The energy method applied to (\ref{eq:nonlin}) yields
\begin{equation}\label{eq:boundaryPart1}
\frac{1}{2} \frac{d}{dt}\|U\|^2_P + \oint\limits_{\partial\Omega}U^T  (n_i A_i)  \\\ U \\\ ds= \int\limits_{\Omega}(U_{x_i}^T  A_i U - U^T B_i U_{x_i}) \\\ d \Omega -\int\limits_{\Omega} U^T  C U \\\ d \Omega,
\end{equation}
where $(n_1,..,n_k)^T$ is the outward pointing unit normal. The terms on the right-hand side of (\ref{eq:boundaryPart1}) are cancelled by (\ref{eq:boundcond}) leading to energy conservation. If in addition (\ref{1Dprimalstab}) or (\ref{1Dprimalstab_strong}) holds, an energy bound or a strong energy bound respectively follows after integration in time. 
\end{proof}
\begin{remark}\label{necessary}
For linear problems, a minimal number of boundary conditions that lead to a bound is a necessary and sufficient condition for well-posedness. For nonlinear problems this is not the case. A minimal number of boundary conditions that lead to a bound is a necessary but not a sufficient condition \cite{nordstrom2005,nordstrom2020,nordstrom2021linear}.
\end{remark}

For non-smooth solutions $U$, (\ref{eq:nonlin}) interpreted in a weak sense allows for an entropy conservation law.
\begin{proposition}\label{:Entropy-relation}
The IBVP  (\ref{eq:nonlin}) together with conditions (\ref{eq:boundcond}) leads to the entropy conservation law
\begin{equation}\label{eq:entropy-equation}
S_t + (F_i)_{x_i} = 0,
\end{equation}
where $S=U^T P U/2$ is the mathematical (or generalised) entropy  and  $F_i=U^T A_i U$ are the entropy fluxes.
\end{proposition}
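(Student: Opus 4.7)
The plan is to derive the entropy identity pointwise, essentially by undoing the integration-by-parts step used in the proof of Proposition \ref{lemma:Matrixrelation} and working with the integrand instead of the integral. Multiplying (\ref{eq:nonlin}) from the left by $U^T$ gives, after using that $P$ is symmetric and time-independent,
\begin{equation*}
\tfrac{1}{2}(U^T P U)_t + U^T (A_i U)_{x_i} + U^T B_i U_{x_i} + U^T C U = 0.
\end{equation*}
The first term is already $S_t$, so the task is to rewrite the remaining three terms as the divergence $(U^T A_i U)_{x_i}$.

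For the two middle terms I would apply the product rule in reverse: since $(U^T A_i U)_{x_i} = U^T_{x_i}(A_i U) + U^T(A_i U)_{x_i}$, and since the scalar $U^T_{x_i} A_i U$ equals its own transpose $U^T A_i^T U_{x_i}$, the assumption $B_i = A_i^T$ from (\ref{eq:boundcond}) gives $U^T_{x_i} A_i U = U^T B_i U_{x_i}$. Hence the sum of the two middle terms collapses to the desired flux derivative $(U^T A_i U)_{x_i} = (F_i)_{x_i}$.

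For the zeroth-order term, the scalar $U^T C U$ can be symmetrised as $\tfrac{1}{2} U^T (C + C^T) U$, which vanishes identically by the second condition in (\ref{eq:boundcond}). Putting the three pieces together yields $S_t + (F_i)_{x_i} = 0$ pointwise, which is exactly the claimed conservation law.

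There is really no main obstacle here: all ingredients mirror the algebra already done inside the proof of Proposition \ref{lemma:Matrixrelation}, only without integrating over $\Omega$. The one subtlety worth stating explicitly in the write-up is the transpose trick that turns $U^T_{x_i} A_i U$ into $U^T B_i U_{x_i}$, since this is where hypothesis (\ref{eq:boundcond}) enters and where the distinction between the linear ($V \neq U$) and nonlinear ($V = U$) cases becomes irrelevant (the manipulation only uses the algebraic symmetry of $A_i$ and $C$ with respect to $B_i$ and $-C^T$, not any differentiation of $V$).
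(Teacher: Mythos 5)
Your proof is correct and follows essentially the same route as the paper's own: multiply (\ref{eq:nonlin}) on the left by $U^T$, identify $\tfrac12(U^TPU)_t$ as $S_t$, use $B_i=A_i^T$ together with the scalar-transpose identity $U_{x_i}^T A_i U = U^T A_i^T U_{x_i}$ to collapse the convective terms into $(U^TA_iU)_{x_i}$, and kill $U^TCU$ via $C+C^T=0$. You simply spell out the product-rule and transpose steps that the paper leaves implicit.
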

\begin{proof}
Multiplication of  (\ref{eq:nonlin}) from the left with $U^T$ yields
\begin{equation}\label{eq:entropy-der}
 (U^T P U/2)_t  +  (U^T A_i U)_{x_i}  = (U_{x_i}^T  A_i U - U^T B_i U_{x_i}) - U^T  C U.
\end{equation}
The right-hand side of (\ref{eq:entropy-der}) is cancelled by (\ref{eq:boundcond}) leading to the entropy conservation relation (\ref{eq:entropy-equation}).
\end{proof}
\begin{remark}\label{entropy-comments}
The entropy conservation law (\ref{eq:entropy-equation}) holds for smooth solutions. For discontinuous solutions it holds in a distributional sense. The non-standard compatibility conditions in this case reads
\begin{equation}\label{eq:entropy-der}
\partial S/\partial U=S_{U}=U^T P,  \quad S_{U} P^{-1}( (A_i(V) U)_{x_i}+A^T_i(V)U_{x_i}+C(V)U)=(U^T A_i U)_{x_i}.
\end{equation}
The entropy $S$ is convex ($S_{UU}=P$) and identical to the energy \cite{nordstrom2021linear}. 
In the following we will use energy to denote both quantities, but sometimes remind the reader by writing out both notations explicitly.
\end{remark}


\section{Nonlinear energy and entropy boundedness: the boundary conditions}\label{BC_theory} 
We start with a couple of convinient transformations. Consider the boundary term
\begin{equation}\label{1Dprimalstab_trans}
\oint\limits_{\partial\Omega}U^T  (n_i A_i)   \\\ U \\\ ds = \oint\limits_{\partial\Omega} \frac{1}{2} U^T ((n_i A_i)  +(n_i A_i )^T) U \\\ ds = \oint\limits_{\partial\Omega}U^T   \tilde A(V)   \\\ U \\\ ds,
\end{equation}
where $\tilde A(V)$ is symmetric. Recall that if $V=U$ we are dealing with a nonlinear problem, otherwise a variable coefficient problem. In the CFD problems we consider, the Cartesian velocity
field is transformed to the normal and tangential ones leading to the new vectors $U_n=NU$. Next we rotate the matrix $\tilde A$ to diagonal form as $ \tilde T^T \tilde A  \tilde T =  \Lambda = diag( \lambda_i)$
which gives us new rotated variables $W = (N  \tilde T)^{-1}U=T^{-1} U$ and
\begin{equation}\label{1Dprimalstab_trans_final}
\oint\limits_{\partial\Omega}U^T   \tilde A(V)   \\\ U \\\ ds  \\\ ds = \oint\limits_{\partial\Omega}W^T   \Lambda   \\\ W\\\ ds = \oint\limits_{\partial\Omega}(W^+)^T   \Lambda^+   \\\ W^+ + (W^-)^T   \Lambda^-   \\\ W^-\\\ ds=\oint\limits_{\partial\Omega}  \lambda_i W_i^2 \\\ ds,
\end{equation}
where we again use Einsteins summation convention. In (\ref{1Dprimalstab_trans_final}), $\Lambda^+$ and  $\Lambda^-$ denote the positive and negative parts of $\Lambda$ respectively, while $W^+$  and $W^-$ denote the corresponding variables.  The new rotated variables $W=W(U)$ are functions of the solution in both the linear and nonlinear case. In the nonlinear case,  the diagonal matrix  $\Lambda(U)$ is solution dependent and not a priori bounded while in the linear case, $\Lambda(V)$ is bounded by external data. This difference lead to significant differences in the boundary condition procedure.
\begin{remark}\label{Sylvester}
For linear problems, the number of boundary conditions is equal to the number of eigenvalues of $\tilde A(V)$ with the wrong (in this case negative) sign  \cite {nordstrom2020}. Sylvester's Criterion \cite{horn2012}, show that the number of boundary conditions is equal to the number of $\lambda_i(V)$  with the wrong sign if the rotation matrix  $T$ is non-singular.  In the nonlinear case where $\lambda_i=\lambda_i(U)$ it is more complicated since multiple forms of the boundary term $W^T   \Lambda W$ may exist,  see Section \ref{Eulerex} below and \cite{nordstrom2022linear-nonlinear,Nordstrom2022_Skew_Euler,nordstrom2021linear} for examples. With a slight abuse of notation we will sometimes refer to $\Lambda(U)$ as "eigenvalues" and to the rotated variables $W(U)$ as "characteristic" variables, although strictly speaking they are not, even though they play a similar role.
\end{remark}

We will impose the boundary conditions both strongly and weakly. For the weak imposition we introduce a lifting operator $L_C$ that enforce the boundary conditions in our governing equation (\ref{eq:nonlin}) as follows
\begin{equation}\label{eq:nonlin_lif}
P U_t + (A_i(V) U)_{x_i}+A^T_i(V)U_{x_i}+C(V)U+L_C(L(V)U-g)=0,  \quad t \geq 0,  \quad  \vec x=(x_1,x_2,..,x_k) \in \Omega.
\end{equation}
The lifting operator for two smooth vector functions  $\Phi, \Psi$ satisfies $\int\limits \Phi^T   L_C(\Psi) d \Omega = \oint\limits \Phi^T  \Psi ds$
which enables development of the essential parts of the numerical boundary procedure in the continuous setting \cite{Arnold20011749,nordstrom_roadmap}.

\subsection{The general form of nonlinear boundary conditions in rotated variables}\label{nonlinear_BC_char}
The starting point for the derivation of stable general nonlinear (and linear)  boundary conditions (\ref{eq:nonlin_BC}) is the form (\ref{1Dprimalstab_trans_final}) of the boundary term. First we need to find the formulation (\ref{1Dprimalstab_trans_final}) with a {\it minimal} number of entries in $\Lambda^-$ \cite{nordstrom2022linear-nonlinear,Nordstrom2022_Skew_Euler,nordstrom2021linear} (there might be more than one formulation of the cubic boundary terms). Next, we need to specify the characteristic variables $W^-$ in terms of $W^+$ and external data. The general form is
\begin{equation}\label{Gen_BC_form}
 S(W^--R W^+) =G  \quad  \text{or equivalently} \quad  W^- = R W^+ + S^{-1}G.
\end{equation}
In (\ref{Gen_BC_form}), $S$ is a non-singular matrix combining values of $W^-$, the matrix $SR$ combine values of $W^+$ while $G$ is given external data. The boundary condition (\ref{Gen_BC_form}) implemented weakly using a lifting operator is
\begin{equation}\label{Pen_term_Gen_BC_form}
L_C= L_C(2( J^-T^{-1})^T \Sigma (W^- - R W^+ - S^{-1}G)),
\end{equation}
where $W=T^{-1}U$, $W^-=J^-W$, $W^+=J^+W$ and $\Sigma$ is a penalty matrix. 
 After the derivation of the stability conditions we will return to the boundary condition formulation (\ref{eq:nonlin_BC}) in the original variables.

\subsection{Boundary conditions and implementation techniques for stability of nonlinear problems}\label{summary_BC_theory}
Before attacking the nonlinear problem, we digress momentarily to the linear case to introduce one aspect of our subsequent nonlinear analysis. 
In the simplest possible version of (\ref{Gen_BC_form}) one can specify $W^-=g$ corresponding to negative $\lambda_i(V)$ indicated by $\lambda_i^-=-|\lambda_i(V)|$. Since $|\lambda_i(V)|$ are bounded, we obtain
\begin{equation}\label{1Dprimalstab_trans_final_extra}
\oint\limits_{\partial\Omega}U^T   \tilde A(V)   \\\ U \\\ ds  \\\  = \oint\limits_{\partial\Omega}W^T   \Lambda   \\\ W\\\ ds =  \oint\limits_{\partial\Omega}  (W^+)^T   \Lambda^+   \\\ W^+ + g^T   \Lambda^-   \\\ g\\\ ds \geq  - \oint\limits_{\partial\Omega}  G^TG \\\ ds,
\end{equation}
where $G_i = \sqrt{|\lambda_i^-(V)|}g_i$. Hence we get a strong energy bound  in terms of external data. However, in the nonlinear case, no estimate is obtained since $\lambda_i^-(U)$ is not a priori bounded, see \cite{nordstrom2019, nordstrom2020spatial} for IEE examples.

The procedure to arrive at  a general stable nonlinear inhomogeneous boundary condition and implementation consist of the following steps for the unknowns in $R,S,\Sigma$ in (\ref{Gen_BC_form}).
\begin{enumerate}

\item Derive strong homogeneous ($G=0$) boundary conditions. This lead to conditions on matrix $R$.

\item Derive strong inhomogeneous ($G\neq 0$) boundary conditions.  This lead to conditions on matrix $S$.

\item Derive weak homogeneous ($G=0$) boundary conditions. This lead to conditions on matrix $\Sigma$.

\item Show that the weak inhomogeneous ($G\neq 0$) case of the  boundary conditions follow from 1-3 above.

\end{enumerate}
The following Lemma (structured as the step-by-step procedure above) is the main result of this paper. 
\begin{lemma}\label{lemma:GenBC}
Consider the boundary term described in (\ref{1Dprimalstab_trans}),(\ref{1Dprimalstab_trans_final}) and the boundary conditions (\ref{Gen_BC_form}) implemented strongly or weakly using (\ref{Pen_term_Gen_BC_form}).
Furthermore, let $ |\Lambda^- |=diag( |\lambda^-_i |)$ and $  |\Lambda^- |^{1/2}=diag(  \sqrt{|\lambda_i^-|})$. 

The boundary term augmented with {\bf 1. strong nonlinear homogeneous boundary conditions} is positive semi-definite if the matrix $R$ is such that
\begin{equation}\label{R_condition}
\Lambda^+ - R^T  |\Lambda^- |  R  \geq 0.
\end{equation}

The boundary term augmented with {\bf 2. strong nonlinear inhomogeneous boundary conditions} is bounded by external given data if
the matrix  $R$ satisfies (\ref{R_condition}) with strict inequality and the matrix $S$ satisfies 
\begin{equation}\label{S_condition}
S =   \tilde S^{-1} |\Lambda^- |^{1/2} \ \mbox{with 
$\tilde S$ sufficiently small.}
\end{equation}

The boundary term augmented with {\bf 3. weak nonlinear homogeneous boundary conditions} is positive semi-definite if the matrix $R$
satifies (\ref{R_condition}) and the matrix $\Sigma$ satisfies
\begin{equation}\label{Sigma_condition}
\Sigma=|\Lambda^- |.
\end{equation}

The boundary term augmented with {\bf 4. weak nonlinear inhomogeneous boundary conditions} is bounded by external given data if
the matrix $R$ satisfies (\ref{R_condition}) with strict inequality,  the matrix $S$ satisfies  (\ref{S_condition}) and the matrix $\Sigma$ satisfies (\ref{Sigma_condition}).
\end{lemma}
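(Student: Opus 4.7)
\bigskip

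\noindent\textbf{Proof proposal.} My plan is to work entirely on the boundary in the rotated variables $W=T^{-1}U$ and apply the four items of Lemma \ref{lemma:GenBC} sequentially, reducing everything to algebraic manipulations of the scalar quadratic form
\begin{equation*}
Q \;=\; (W^+)^T \Lambda^+ W^+ \;-\; (W^-)^T |\Lambda^-| W^-
\end{equation*}
obtained from (\ref{1Dprimalstab_trans_final}) by writing $\Lambda^- = -|\Lambda^-|$. In the strong case the boundary condition is inserted directly into $Q$; in the weak case I will add the contribution of the lifting operator, which, by the defining duality $\int \Phi^T L_C(\Psi)\,d\Omega = \oint \Phi^T\Psi\,ds$, multiplied from the left by $U^T$ and using $J^-W = W^-$, produces exactly the surface term $2(W^-)^T \Sigma(W^- - RW^+ - S^{-1}G)$ that is added to $Q$.

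For step 1 (strong, $G=0$), substituting $W^- = RW^+$ into $Q$ yields $(W^+)^T(\Lambda^+ - R^T|\Lambda^-|R)W^+$, which is nonnegative precisely when (\ref{R_condition}) holds. For step 2 (strong, $G\neq0$), I substitute $W^- = RW^+ + S^{-1}G$ into $Q$, expand, and use $S^{-1} = |\Lambda^-|^{-1/2}\tilde S$ to turn the cross term into $-2(W^+)^T R^T |\Lambda^-|^{1/2}\tilde S G$ and the data term into $-G^T \tilde S^T \tilde S G$. I then apply Young's inequality $-2a^T b \geq -\epsilon\,a^T a - \epsilon^{-1} b^T b$ with $a=|\Lambda^-|^{1/2}RW^+$ and $b = \tilde S G$ to absorb the cross term into $(1+\epsilon)R^T|\Lambda^-|R$, obtaining
\begin{equation*}
Q \;\geq\; (W^+)^T\bigl(\Lambda^+ - (1+\epsilon)R^T|\Lambda^-|R\bigr)W^+ \;-\; (1+\epsilon^{-1})\,G^T \tilde S^T \tilde S G.
\end{equation*}
The strict inequality in (\ref{R_condition}) lets me choose $\epsilon>0$ so that the first bracket remains positive semidefinite, and choosing $\tilde S$ small enough so that $(1+\epsilon^{-1})\tilde S^T \tilde S \leq I$ yields $Q \geq -G^T G$.

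For step 3 (weak, $G=0$), adding the penalty term with $\Sigma = |\Lambda^-|$ and completing the square in $W^-$ is the natural move; the algebra collapses to the clean identity
\begin{equation*}
Q + 2(W^-)^T|\Lambda^-|(W^- - RW^+) \;=\; (W^- - RW^+)^T|\Lambda^-|(W^- - RW^+) + (W^+)^T(\Lambda^+ - R^T|\Lambda^-|R)W^+,
\end{equation*}
both summands being nonnegative under (\ref{R_condition}). For step 4 (weak, $G\neq 0$), I repeat the same completion of the square now centered at $W^- = RW^+ + |\Lambda^-|^{-1/2}\tilde S G$, which produces a nonnegative quadratic in $W^- - RW^+ - S^{-1}G$, the nonnegative term $(W^+)^T(\Lambda^+ - R^T|\Lambda^-|R)W^+$, and the mixed residuals $-2(W^+)^T R^T|\Lambda^-|^{1/2}\tilde S G - G^T \tilde S^T \tilde S G$; these last two are controlled exactly as in step 2.

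The only genuinely delicate point is the bookkeeping in steps 2 and 4: one must verify that the same prescription $S=\tilde S^{-1}|\Lambda^-|^{1/2}$ simultaneously (i) turns $|\Lambda^-|S^{-1}$ into the symmetric-looking factor $|\Lambda^-|^{1/2}\tilde S$ needed to apply Young's inequality cleanly, and (ii) makes the residual data term $G^T \tilde S^T \tilde S G$ bounded independently of the solution $U$. This is why the condition is phrased as ``$\tilde S$ sufficiently small'' rather than fixed: one absorbs the Young constant $(1+\epsilon^{-1})$ and any remaining slack in the strict inequality of (\ref{R_condition}) into the size of $\tilde S$. Apart from this, every step is a direct algebraic verification on the boundary, and no further structural property of $\Lambda^\pm$, $R$, $S$ or $\Sigma$ beyond what is explicitly assumed is required.
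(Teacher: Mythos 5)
Your proposal is correct and follows essentially the same route as the paper: direct substitution of the boundary condition into $W^T\Lambda W$ in the strong cases, and addition of the lifting-operator surface term $2(W^-)^T\Sigma(W^--RW^+-S^{-1}G)$ followed by completion of the square in the weak cases. The only difference is cosmetic: where the paper collects the $W^+$--$G$ coupling into a $2\times 2$ block matrix and asserts its positive semi-definiteness for $\tilde S$ sufficiently small, you establish the same fact explicitly via Young's inequality, which if anything makes the meaning of ``sufficiently small'' (and its interplay with the strict version of (\ref{R_condition})) slightly more concrete.
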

\begin{proof} We proceed in the step-by-step manner described above.

{\it 1. The homogeneous boundary condition (\ref{Gen_BC_form}) implemented strongly} (with $G=0$) lead to $W^T   \Lambda   W  = (W^+)^T ( \Lambda^+ - R^T  |\Lambda^- |  R)W^+$ and (\ref{R_condition})
lead to a positive semi-definite boundary term.

{\it 2. The inhomogeneous boundary condition (\ref{Gen_BC_form}) implemented strongly} (with $G\neq0$)  lead to
\begin{equation}\label{S1_derivation}
 W^T   \Lambda   W  = (W^+)^T  \Lambda^+ W^+ -  (W^+ + S^{-1} G)^T  |\Lambda^- |  (W^+ + S^{-1} G).
\end{equation}
Expanding (\ref{S1_derivation}), adding and subtracting $G^T G$ and using $S$ as in (\ref{S_condition}) lead to the result
\begin{equation}
\label{estimate_2}
 W^T   \Lambda   W =
\begin{bmatrix}
W^+ \\
G
\end{bmatrix}^T
\begin{bmatrix}
\Lambda^+ - R^T  |\Lambda^- |  R & - R^T  |\Lambda^- |^{1/2} \tilde S \\
-\tilde S^T  |\Lambda^- |^{1/2} R  &  I-\tilde S^T \tilde S
\end{bmatrix}
\begin{bmatrix}
W^+ \\
G
\end{bmatrix} - G^T G,
\end{equation}
which is bounded from below by external data if  $\tilde S$ is sufficiently small and (\ref{R_condition}) holds strictly.

{\it 3. The homogeneous boundary condition (\ref{Gen_BC_form}) implemented weakly} (with $G=0$)  using the lifting operator in (\ref{Pen_term_Gen_BC_form}) lead to the boundary term
\begin{equation}\label{Sigma1_derivation}
 W^T \Lambda   W  + 2 U^T  ( J^-T^{-1})^T \Sigma (W^- - R W^+))= W^T   \Lambda   W^ + + 2 (W^-)^T  \Sigma (W^- - R W^+).
 \end{equation}
 Collecting similar terms transforms the right hand side to
\begin{equation}\label{Sigma2_derivation}
 (W^+)^T   \Lambda^+  W^+ +( W^-)^T(- |\Lambda^- | + 2 \Sigma) W^- -2 ( W^-)^T \Sigma R W^+.
\end{equation}
The choice (\ref{Sigma_condition}) of $\Sigma$ followed by adding and subtracting  $(R W^+)^T  |\Lambda^- | R W^+$ transform  (\ref{Sigma2_derivation}) into
\begin{equation}\label{Sigma4_derivation}
  (W^+)^T ( \Lambda^+ - R^T  |\Lambda^- |  R)W^+ + (W^--RW^+)^T |\Lambda^- | (W^--R W^+),
\end{equation}
which lead to positive semi-definite boundary term by using condition (\ref{R_condition}).

{\it The inhomogeneous boundary condition (\ref{Gen_BC_form}) implemented weakly} (with $G\neq0$)  using the lifting operator in (\ref{Pen_term_Gen_BC_form}) and the choice $\Sigma$ in (\ref{Sigma_condition}) lead to the boundary terms
\[
W^T \Lambda   W +  2 U^T  ( J^-T^{-1})^T \Sigma (W^- - R W^+ - S^{-1} G)) =W^T \Lambda   W   + 2 (W^-)^T  |\Lambda^- | (W^- - R W^+  -  S^{-1}G)).
\]
By adding and subtracting $(W^-)^T  |\Lambda^- | (W^-)$  and rearranging, the boundary terms above can be written as
\begin{equation}\label{final _gen_weak_result_1}
(W^+)^T (\Lambda^+ - R^T  |\Lambda^- | R) W^+ +   (W^- - R W^+)^T  |\Lambda^- | (W^- - R W^+) - 2 (W^-)^T  |\Lambda^- |  S^{-1}G.
\end{equation}
By rearranging  (\ref{final _gen_weak_result_1}) we find that it is equivalent to
\begin{equation}\nonumber
 (W^- - R W^+ - S^{-1} G)^T  |\Lambda^- | (W^- - R W^+ - S^{-1} G) + 
 \begin{bmatrix}
W^+ \\
G
\end{bmatrix}^T
\begin{bmatrix}
\Lambda^+ - R^T  |\Lambda^- |  R & - R^T  |\Lambda^- |  S^{-1}  \\
-( S^{-1} )^T  |\Lambda^- | R  &  - (S^{-1} )^T  |\Lambda^- | S^{-1} 
\end{bmatrix}
\begin{bmatrix}
W^+ \\
G
\end{bmatrix}.
\end{equation}
The first left term is obviously positive semi-definite. By adding and subtracting the boundary data $G^T G$ and inserting the matrix  $S$ as in (\ref{S_condition}), the second right term becomes
\begin{equation}\label{final _gen_weak_result_3}
 \begin{bmatrix}
W^+ \\
G
\end{bmatrix}^T
\begin{bmatrix}
\Lambda^+ - R^T  |\Lambda^- |  R & - R^T  |\Lambda^- |^{1/2} \tilde S \\
-\tilde S^T  |\Lambda^- |^{1/2} R  &  I-\tilde S^T \tilde S
\end{bmatrix}
\begin{bmatrix}
W^+ \\
G
\end{bmatrix} - G^T G, 
\end{equation}
which is bounded from below by external data if $\tilde S$ is sufficiently small and condition (\ref{R_condition}) holds strictly.
\end{proof}
Lemma \ref{lemma:GenBC} can be used to prove that the estimates (\ref{1Dprimalstab}) and (\ref{1Dprimalstab_strong}) in Proposition \ref{lemma:Matrixrelation} holds.

\subsection{The general form of nonlinear boundary conditions in original variables}\label{nonlinear_BC_orig}
We are now ready to connect the characteristic boundary condition formulation (\ref{Gen_BC_form}) with (\ref{eq:nonlin_BC}) in the original variables.  By using  the definitions $W=T^{-1}U$, $W^-=J^-W$, $W^+=J^+W$ and relation (\ref{S_condition}) we find that (\ref{Gen_BC_form}) transforms to
\begin{equation}\label{Gen_BC_form_expanded}
\tilde S^{-1}  |\Lambda^- |^{1/2} (J^- - R J^+)T^{-1}U =G.
\end{equation}
By comparing (\ref{eq:nonlin_BC}) and (\ref{Gen_BC_form_expanded}), the original boundary operator and boundary data can be identified as
\begin{equation}\label{eq:nonlin_BC_details}
L=|\Lambda^- |^{1/2} (J^- - R J^+)T^{-1}   \ \mbox{and}   \  g= \tilde S G
\end{equation}
respectively. This concludes the analysis of the general formulation of nonlinear boundary conditions.


\section{Application of the general theory to initial boundary value problems in CFD}\label{Examples}
We will specifically consider the IEEs, the  SWEs and the CEEs, and focus on the boundary conditions. 

\subsection{The 2D incompressible Euler equations}\label{Eulerex}
The incompressible 2D Euler equations in split form are
\begin{equation}
P U_t 
+ \frac{1}{2}\left[ (A U)_x + A U_x 
+  (B U)_y + B U_y \right]
= 0.
\label{NS_splitting}
\end{equation} 
where $U=(u,v,p)^T$ and
\begin{equation}
P =
\begin{bmatrix}
1 & 0 & 0 \\
0 & 1 & 0 \\
0 & 0 & 0
\end{bmatrix},
\quad
A=
\begin{bmatrix}
u & 0 & 1 \\
0 & u & 0 \\
1 & 0 & 0
\end{bmatrix},
\quad
B=
\begin{bmatrix}
v & 0 & 0 \\
0 & v & 1 \\
0 & 1 & 0
\end{bmatrix}.
\label{ABI}
\end{equation}
Since the matrices $A,B$ are symmetric, the formulation (\ref{NS_splitting}) is in the required skew-symmetric form (\ref{eq:nonlin_lif})
We obtain an estimate in the semi-norm  $\|U\|^2_P= \int_{\Omega} U^T P  U d \Omega$ involving only the velocities. Note that the pressure $p$ includes a division by the constant density, and hence has the dimension velocity squared.

By applying the transformation $W = T^{-1}U$ described above, the boundary term gets the form
\begin{equation}\label{1Dprimalstab_trans_final_icomp}
U^T (n_1 A + n_2 B) = W^T  \Lambda W= (W^+)^T   \Lambda^+   W^+ + (W^-)^T   \Lambda^- W^-
\end{equation}
where $W=(u_n+p/u_n, u_{\tau}, p/u_n)^T$,  $\Lambda=diag(u_n, u_n, -u_n)$, $u_n=n_1u+n_2v$ and  $ u_{\tau}=-n_2u+n_1v$.  At inflow 
\begin{equation}
W^-=
\begin{bmatrix}
u_n+p/u_n\\
u_{\tau} 
\end{bmatrix},
\quad
\Lambda^- =
\begin{bmatrix}
u_n & 0 \\
0 & u_n 
\end{bmatrix},
\quad
W^+= p/u_n,
\quad
\Lambda^+ = -u_n
\label{IEE_inflow}
\end{equation}
where $u_n < 0$ while at outflow with $u_n > 0$ we get the reversed situation with
\begin{equation}
W^+=
\begin{bmatrix}
u_n+p/u_n\\
u_{\tau} 
\end{bmatrix},
\quad
\Lambda^+ =
\begin{bmatrix}
u_n & 0 \\
0 & u_n 
\end{bmatrix},
\quad
W^-= p/u_n,
\quad
\Lambda^- = -u_n.
\label{IEE_outflow}
\end{equation}
By using the definitions in (\ref{IEE_inflow}) and (\ref{IEE_outflow}), it is straightforward to check for boundedness using Lemma \ref{lemma:GenBC}.
\begin{example}\label{IEE_EX} Consider the general form of boundary condition in (\ref{Gen_BC_form}). 

With Dirichlet inflow conditions on the normal and tangential velocities $u_n, u_{\tau}$ we find that 
\begin{equation}
W^- - RW^+=
\begin{bmatrix}
u_n+p/u_n\\
u_{\tau} 
\end{bmatrix}
-
\begin{bmatrix}
R_1 \\
R_2
\end{bmatrix} p/u_n=
\begin{bmatrix}
u_n \\
u_{\tau}
\end{bmatrix}
\quad \Rightarrow \quad
\begin{bmatrix}
R_1 \\
R_2
\end{bmatrix}=
\begin{bmatrix}
1 \\
0
\end{bmatrix},
\label{IEE_inflow_ex}
\end{equation}
which lead to $ \Lambda^+ - R^T  |\Lambda^- |  R=0$. Hence condition  (\ref{R_condition}) is satisfied, but not strictly, which makes the choice of $S$ in (\ref{S_condition}) irrelevant. This leads to boundedness, but not strong boundedness as defined in Proposition \ref{lemma:Matrixrelation}. A weak 
implementation require $\Sigma= |\Lambda^- |=diag( |u_n |,|u_n |)$ as specified in (\ref{Sigma_condition}).

For an outflow condition on the characteristic variable $p/u_n$, the boundary condition (\ref{Gen_BC_form}) holds with $ R=(0,0)$
and hence condition (\ref{R_condition})  holds strictly.  This leads to a strongly energy bounded solution using $S=\tilde S^{-1} \sqrt{|u_n |}$ with $ |\tilde S | \leq 1$ as  can be seen in (\ref{estimate_2}) and (\ref{final _gen_weak_result_3}) and
 required in (\ref{S_condition}). A weak  implementation require $\Sigma= |\Lambda^- |=|u_n |,$ as specified in (\ref{Sigma_condition}).
\end{example}

\subsection{The 2D shallow water equations}\label{SWEex_2D}

The 2D SWEs on  skew-symmetric form as required in Proposition \ref{lemma:Matrixrelation} and derived in  \cite{nordstrom2022linear-nonlinear} are
\begin{equation}\label{eq:swNoncons_new_skew}
    U_t +  (A U)_x + A^T U_x+ (B U)_y + B^T U_y+CU= 0,
\end{equation}
where $U=(U_1,U_2,U_3)^T=(\phi, \sqrt{\phi} u, \sqrt{\phi} v))^T$, $\phi = g h$ is the geopontential  \cite{oliger1978},  $h$ is the water height, $g$ is the gravitational constant and $(u,v)$ is the fluid velocity in $(x,y)$ direction respectively.  The Coriolis forces are included in the matrix $C$ with the function $f$ which is typically a function of latitude \cite{shallowwaterbook,whitham1974}. Note that $h>0$ and $\phi>0$ from physical considerations.  The matrices in (\ref{eq:swNoncons_new_skew}) constitute a
two-parameter family 
\begin{equation}\label{eq:swNoncons_new_matrix_ansatz_sol_A}
A =        \begin{bmatrix}
     \alpha \frac{U_2}{\sqrt{U_1}}  & (1-3 \alpha) \sqrt{U_1}                  & 0 \\
     2\alpha  \sqrt{U_1}                 & \frac{1}{2}  \frac{U_2}{\sqrt{U_1}} &  0  \\
     0                                            & 0                                                    &\frac{1}{2} \frac{U_2}{\sqrt{U_1}}
       \end{bmatrix},
       B = 
       \begin{bmatrix}
     \beta \frac{U_3}{\sqrt{U_1}}  & 0               &  (1-3 \beta) \sqrt{U_1}    \\
     0              & \frac{1}{2}  \frac{U_3}{\sqrt{U_1}} &  0  \\
     2\beta  \sqrt{U_1}                                         & 0                                                    &\frac{1}{2} \frac{U_3}{\sqrt{U_1}}
       \end{bmatrix}, 
  C = \begin{bmatrix}
     0 & 0  & 0      \\
     0 & 0  &  -f    \\
     0 & +f & 0
       \end{bmatrix}
\end{equation}
where the parameters $\alpha, \beta$ are arbitrary. (Symmetric matrices are e.g. obtained with $\alpha=\beta=1/5$.) 
\label{rem:noalpha_2d}

The  energy rate cannot depend on the free parameters $\alpha$ and $\beta$ in the matrices  $A$ and $B$ since they are not present in the original SWEs from where (\ref{eq:swNoncons_new_skew}) is derived \cite{nordstrom2022linear-nonlinear}.
By computing the boundary term, we find
\begin{equation}\label{boundarmatrix_2D}
U^T (n_1A+n_2 B)U = 
U^T
       \begin{bmatrix}
      \frac{\alpha+\beta}{2} u_n                      &  \frac{1-\alpha}{2} n_x \sqrt{U_1} &   \frac{1-\beta}{2} n_y \sqrt{U_1}     \\
     \frac{1-\alpha}{2} n_x \sqrt{U_1} & \frac{1}{2} u_n                             &    0                                                   \\
     \frac{1-\beta}{2} n_y \sqrt{U_1}   & 0                                                  &\frac{1}{2} u_n
       \end{bmatrix}
U=
U^T
       \begin{bmatrix}
      u_n                      & 0                       & 0    \\
                                  & \frac{1}{2} u_n  &    0                                                   \\
       0                         & 0                       &\frac{1}{2} u_n
       \end{bmatrix}
U
\end{equation}
and the (somewhat mysterious) dependency on the free parameters $\alpha$ and $\beta$ vanishes. The relation (\ref{boundarmatrix_2D}) seemingly indicate that we need three boundary conditions at inflow ($u_n<0$), and zero at outflow ($u_n>0$).

However, this is a nonlinear problem and as shown in  \cite{nordstrom2021linear}, it can be rewritten by changing variables and observing that $u_n=(n_1 U_2+n_2 U_3)/\sqrt{U_1}$. Reformulating  (\ref{boundarmatrix_2D}) in new variables we find
\begin{equation}\label{boundarmatrix_2D_SWE_final}
U^T (n_1A+n_2 B)U = 
U^T
       \begin{bmatrix}
      u_n                      & 0                       & 0    \\
                                  & \frac{1}{2} u_n  &    0                                                   \\
       0                         & 0                       &\frac{1}{2} u_n
       \end{bmatrix}
U=
W^T
       \begin{bmatrix}
      -\frac{1}{2 U_n \sqrt{U_1}}                     & 0                       & 0    \\
                                  & \frac{1}{2 U_n \sqrt{U_1}}   &    0                                                   \\
       0                         & 0                       &\frac{1}{2 U_n \sqrt{U_1}} 
       \end{bmatrix}
W,
\end{equation}
where $W^T=(W_1, W_2, W_3)=(U_1^2, U_1^2+U_n^2, U_n U_{\tau})$. The variables $(U_1,U_n,U_{\tau})=(\phi, \sqrt{\phi} u_n,  \sqrt{\phi} u_{\tau})$ are directed in the  normal ($U_n$) and tangential  ($U_{\tau}$) direction respectively. The relation (\ref{boundarmatrix_2D_SWE_final}) indicate
that only two boundary conditions are needed at inflow when $U_n<0$. Since we search for a minimal number of boundary conditions, we consider the formulation (\ref{boundarmatrix_2D}) for outflow, where no boundary conditions are required. To be specific, at inflow where $U_n < 0$ we find 
\begin{equation}
W^-=
\begin{bmatrix}
U_1^2+U_n^2\\
U_n U_{\tau} 
\end{bmatrix},
\quad
\Lambda^- =
\begin{bmatrix}
 \frac{1}{2 U_n \sqrt{U_1}}   & 0 \\
0 &  \frac{1}{2 U_n \sqrt{U_1}}  
\end{bmatrix},
\quad
W^+= U_1^2,
\quad
\Lambda^+ = -  \frac{1}{2 U_n \sqrt{U_1}}  
\label{SWE_inflow}.
\end{equation}
The definitions in (\ref{SWE_inflow}) can be used to check any inflow conditions for boundedness using Lemma \ref{lemma:GenBC}.

\begin{example}\label{SWE_Example} Consider the general form of boundary condition in (\ref{Gen_BC_form}).

With Dirichlet inflow conditions on $U_n, U_{\tau}$ we find
\begin{equation}
W^- - RW^+=
\begin{bmatrix}
U_1^2+U_n^2\\
U_n U_{\tau} 
\end{bmatrix}
-
\begin{bmatrix}
R_1 \\
R_2
\end{bmatrix} U_1^2 =
\begin{bmatrix}
U_n^2\ \\
U_n U_{\tau}
\end{bmatrix}
\quad \Rightarrow \quad
\begin{bmatrix}
R_1 \\
R_2
\end{bmatrix}=
\begin{bmatrix}
1\\
0
\end{bmatrix},
\label{IEE_outflow_ex}
\end{equation}
which lead to $ \Lambda^+ - R^T  |\Lambda^- |  R=0$. Hence condition  (\ref{R_condition}) is satisfied, but not strictly, which makes the choice of $S$ in (\ref{S_condition}) irrelevant (similar to the inflow case in Example \ref{IEE_EX}). This leads to boundedness, but not strong boundedness as defined in Proposition \ref{lemma:Matrixrelation}. A weak 
implementation require $\Sigma= |\Lambda^- | $ in (\ref{SWE_inflow}).

By instead specifying the characteristic variable $W^-$ directly (similar to the outflow case in Example \ref{IEE_EX}) we have $R=(0,0)^T$ and (\ref{R_condition}) holds strictly. This lead to a strongly bounded solution if
 $S=\tilde S^{-1} \sqrt{ |\Lambda^- |}$ with $ \tilde S =diag(\tilde s_1,  \tilde s_2)$ sufficiently small as required in (\ref{S_condition}).
A weak  implementation require $\Sigma= |\Lambda^- | $ in (\ref{SWE_inflow}).


\end{example} 

\subsection{The 2D compressible Euler equations}\label{CEEex_2D}

The 2D CEEs on  skew-symmetric form as required in Proposition \ref{lemma:Matrixrelation} and derived in  \cite{Nordstrom2022_Skew_Euler} are
\begin{equation}\label{eq:stab_eq}
P \Phi_t + (A  \Phi)_x + A^T  \Phi_x + (B  \Phi)_y + B^T  \Phi_y=0,
\end{equation}
where  $\Phi=(\sqrt{\rho}, \sqrt{\rho} u, \sqrt{\rho} v, \sqrt{p})^T$, $P=diag(1, (\gamma -1)/2, (\gamma -1)/2, 1)$ and
\begin{equation}\label{eq:new_matrix_finalAs}
A = \frac{1}{2}\begin{bmatrix}
      u                                    &  0  &  0 & 0 \\
      0                                    &   \frac{(\gamma -1)}{2} u  &  0 & 0                \\
      0  &   0                                                      &   \frac{(\gamma -1)}{2} u &  0             \\
      0  &  2 (\gamma -1)  \frac{\phi_4}{\phi_1} & 0  &  (2-\gamma)u           
                   \end{bmatrix}, \quad
B = \frac{1}{2}\begin{bmatrix}
      v                                    &  0  &  0 & 0 \\
      0                                    &  \frac{(\gamma -1)}{2} v  &  0 & 0                \\
      0  &   0                                                      &   \frac{(\gamma -1)}{2} v &  0             \\
      0  &  0 & 2 (\gamma -1)  \frac{\phi_4}{\phi_1}   &  (2-\gamma)v          
                   \end{bmatrix}.
 \end{equation}

By rotating the Cartesian velocities to normal and tangential velocities at the boundary, we obtain
\begin{equation}\label{boundarmatrix_contraction_rotated}
\Phi^T (n_1 \tilde A + n_2 \tilde B) \Phi = 
\Phi^T_r  \begin{bmatrix}
      \alpha^2 u_n  &  0 &  0 & 0                       \\
       0                    &  \frac{(\gamma -1)}{2}   u_n         &  0                                          &      (\gamma -1)  \frac{\phi_4}{\phi_1}                                                 \\
       0                    &   0                       &  \frac{(\gamma -1)}{2}   u_n                                                                 &  0                                                      \\
       0                    &  (\gamma -1)  \frac{\phi_4}{\phi_1}& 0 & (2-\gamma)u_n            
                   \end{bmatrix} \Phi_r,
\end{equation}
where  $\Phi=(\phi_1, \phi_2, \phi_3,  \phi_4)^T=(\sqrt{\rho}, \sqrt{\rho} u_n, \sqrt{\rho} u_{\tau}, \sqrt{p})^T$. 

The boundary term (\ref{boundarmatrix_contraction_rotated}) can be rotated to diagonal form which yield the boundary term $W^T \Lambda W$ where
\begin{equation}
W=
\begin{bmatrix}
\phi_1\\
\phi_2+2 \phi_4^2/\phi_2 \\
\phi_3 \\
\phi_4 
\end{bmatrix}
\quad 
\Lambda=
\begin{bmatrix}
      u_n  &  0 &  0 & 0                       \\
       0                    &  \frac{(\gamma -1)}{2}   u_n         &  0                                          &   0                                                \\
       0                    &   0                       &  \frac{(\gamma -1)}{2}   u_n                                                                 &  0                                                      \\
       0                    &  0& 0 & (2-\gamma)u_n     \Psi(M_n)        
                   \end{bmatrix}.
\label{CEE_def}
\end{equation}
By comparing with (\ref{boundarmatrix_contraction_rotated}) we see that the last diagonal entry is modified by the multiplication of  $\Psi(M_n)$ which is a function of the normal Mach number $M_n = u_n/c$. Explicitly we have
\begin{equation}\label{psi_def}
\Psi(M_n)= 1-\frac{2(\gamma -1)}{\gamma (2-\gamma)} \frac{1}{M_n^2}, 
\end{equation}
which switches sign at  $M_n^2=\gamma (2-\gamma)/(2(\gamma -1))$. 
\begin{remark}
As shown in  \cite{Nordstrom2022_Skew_Euler}, this yields  $|M_n|=1$ for $\gamma=\sqrt{2}$, while for  $\gamma=1.4$ we get $|M_n|=1.05$.
\end{remark}

Due to the sign shift in $\Psi$ at $M_n^2=\gamma (2-\gamma)/(2(\gamma -1))$ we get different cases for inflow where $u_n < 0$.  We find that for subsonic inflow where $u_n <0, \Psi < 0$, the relation (\ref{CEE_def}) leads to
\begin{equation}
W^-=
\begin{bmatrix}
\phi_1\\
\phi_2+2 \phi_4^2/\phi_2 \\
\phi_3
\end{bmatrix},
\quad 
\Lambda^-=
\begin{bmatrix}
      u_n  &  0 &  0                 \\
       0                    &  \frac{(\gamma -1)}{2}   u_n         &  0                                                \\
       0                    &   0                       &  \frac{(\gamma -1)}{2}   u_n                                                                                     \\
\end{bmatrix},
\quad 
W^+= \phi_4,
\quad
\Lambda^+ = (2-\gamma)u_n     \Psi(M_n).
\label{CEE_inflow}
\end{equation}
For supersonic inflow $u_n <0, \Psi >0$ we get $W^-=W$ and $\Lambda^- = \Lambda$ from relation (\ref{CEE_def}), i.e. all eigenvalues are negative. 

In the outflow case, the shift in speed can be ignored since an alternate form of (\ref{boundarmatrix_contraction_rotated}) different from (\ref{CEE_def}) exist. By contracting (\ref{boundarmatrix_contraction_rotated}) we find that 
\begin{equation}\label{contracted_CEE_BT}
\Phi^T (n_x  A + n_y B) \Phi = u_n(\phi_1^2+\frac{(\gamma -1)}{2} (\phi_2^2+\phi_3^2)+\gamma  \phi_4^2)=
\Phi^T_r  \begin{bmatrix}
      u_n  &  0 &  0 & 0                       \\
       0                    &  \frac{(\gamma -1)}{2}   u_n         &  0                                          &    0                                               \\
       0                    &   0                       &  \frac{(\gamma -1)}{2}   u_n                                                                 &  0                                                     \\
       0                    & 0 & 0 & \gamma u_n            
                   \end{bmatrix} \Phi_r,
\end{equation}
which proves that no boundary conditions are necessary in the outflow case. 
\begin{example} Consider the general form of boundary condition in (\ref{Gen_BC_form}).

With Dirichlet inflow conditions on $\phi_1,\phi_2,\phi_3$ for $\Psi(M_n) < 0$ we find using (\ref{IEE_inflow})
\begin{equation}
W^- - RW^+=
\begin{bmatrix}
\phi_1\\
\phi_2+2 \phi_4^2/\phi_2 \\
\phi_3
\end{bmatrix}
-
\begin{bmatrix}
R_1 \\
R_2  \\
R_3
\end{bmatrix} \phi_4=
\begin{bmatrix}
\phi_1\\
\phi_2 \\
\phi_3
\end{bmatrix}
\quad \Rightarrow \quad
\begin{bmatrix}
R_1 \\
R_2  \\
R_3
\end{bmatrix}=
\begin{bmatrix}
0\\
2 \phi_4/\phi_2 \\
0
\end{bmatrix},
\label{IEE_outflow_ex}
\end{equation}
which lead to $ \Lambda^+ - R^T  |\Lambda^- |  R=- |u_n|(2-\gamma + 2(\gamma-1)/(\gamma M_n^2)) < 0$. Hence condition  (\ref{R_condition}) is violated, and no bound can be found. 

By instead specifying the characteristic variable  $W^-$ directly (as for the outflow case in Example \ref{IEE_EX} and inflow case in Example \ref{SWE_Example}) we have $R=(0,0,0)^T$ and (\ref{R_condition}) holds strictly. This lead to a strongly bounded solution if
 $S=\tilde S^{-1} \sqrt{ |\Lambda^- |}$ with $ \tilde S =diag(\tilde s_1,  \tilde s_2, \tilde s_3)$ sufficiently small, see (\ref{S_condition}).
A weak  implementation require $\Sigma= |\Lambda^- | $ in (\ref{CEE_inflow}). In the outflow case, no boundary conditions are required due to (\ref{contracted_CEE_BT}).


\end{example} 

\subsection{Open questions for nonlinear boundary conditions}\label{sec:open_q}
We will end this section by discussing some open questions stemming from the nonlinear analysis above. 

\subsubsection{The number of boundary conditions in nonlinear IBVPs required for boundedness}\label{sec:open_q_number}
The boundary conditions for the SWEs and CEEs are similar in the sense that at least two {\it different} formulations of the boundary terms can be found. The minimal number of required conditions differ both in the inflow and outflow cases. One common feature is that that no outflow conditions seem to be necessary. Another similar feature is that the number of outflow conditions is independent of the speed of sound for the CEEs and the celerity in the SWE case.  Both these effects differ from what one finds in a linear analysis. 
\begin{remark}
By substituting the IEE variables $W=(u_n +p/u_n, u_{\tau},p/u_n)^T$ in (\ref{1Dprimalstab_trans_final_icomp}) with $W=(u_n, u_{\tau}, \sqrt{p})^T$ (similar to the ones used in the CEE and SWE cases) one obtains a similar situation also for the IEEs. The eigenvalues for the IEEs transform from $\Lambda=diag(u_n,u_n,-u_n)$ to $\Lambda=diag(u_n,u_n, 2u_n)$ which leads to different number of boundary conditions.
\end{remark}

\subsubsection{The effect of nonlinear boundary conditions on uniqueness and existence}\label{sec:open_q_uniqueness}
Roughly speaking, a minimal number of dissipative boundary conditions in the linear case  leads to uniqueness by the fact that it determines the normal modes of the solution \cite{Kreiss1970277,Strikwerda1977797}. The minimal number of boundary conditions can also be obtained using the energy method, see \cite{nordstrom2020}. If uniqueness and boundedness for a minimal number of boundary conditions are given, existence can be shown (e.g. using Laplace transforms or difference approximations \cite{gustafsson1995time,kreiss1989initial}). For linear IBVPs,  the number of boundary conditions is independent of the solution and only depend on known external data. For nonlinear IBVPs, that is no longer the case, and the number may change in an unpredictable way as the solution develops in time. In addition, as we have seen above, it also varies depending on the particular formulation choosen. This is confusing and raises a number of questions that we will {\it speculate} on below.


Let us consider the SWEs as an example. The two forms of the boundary terms given in (\ref{boundarmatrix_2D_SWE_final}) were

\begin{equation}\label{boundarmatrix_examples}
U^T
       \begin{bmatrix}
      u_n                      & 0                       & 0    \\
                                  & \frac{1}{2} u_n  &    0                                                   \\
       0                         & 0                       &\frac{1}{2} u_n
       \end{bmatrix}
U=
W^T
       \begin{bmatrix}
      -\frac{1}{2 U_n \sqrt{U_1}}                     & 0                       & 0    \\
                                  & \frac{1}{2 U_n \sqrt{U_1}}   &    0                                                   \\
       0                         & 0                       &\frac{1}{2 U_n \sqrt{U_1}} 
       \end{bmatrix}
W,
\end{equation}
where $W^T=(W_1, W_2, W_3)=(U_1^2, U_1^2+U_n^2, U_n U_{\tau})$ and $(U_1,U_n,U_{\tau})=(\phi, \sqrt{\phi} u_n,  \sqrt{\phi} u_{\tau})$. 
Based on the two formulations in (\ref{boundarmatrix_examples}), one may base the boundary procedure on one of the following four scenarios.
\begin{enumerate}

\item The left formulation with variable $U$ at both inflow and outflow boundaries.

\item The right formulation with variable $W$ at both inflow and outflow boundaries.

\item The left formulation with variable $U$ at inflow and the right formulation with $W$ at outflow boundaries.

\item The right formulation with variable $W$ at inflow and the left formulation with $U$ at outflow boundaries.

\end{enumerate}
Scenario 1 would in a one-dimensional setting lead to three boundary conditions all applied on the inflow boundary. Scenario 2 would also give three boundary conditions, but now two would be applied on the inflow boundary and one on the outflow boundary. Scenario 3 would lead to four boundary conditions, three on the inflow and one on the outflow boundary. Scenario 4 would only give two boundary conditions, both applied on the inflow boundary. 

If the above scenarios were interpreted in the linear sense, both Scenario 1 and 2 would determine the solution uniquely. (One of them would be a better choice than the other depending on the growth or decay of the solution away from the boundary \cite{Kreiss1970277,Strikwerda1977797}.) In scenario 3, the solution would be overspecifed, leading to loss of existence. In scenario 4, the solution would be underspecified, leading to loss of uniqueness. In summary: Scenario 1 and 2 may lead to acceptable solutions, Scenario 3 give no solution at all, while scenario 4 yield a bounded solution with limited (or no) accuracy.

However, since these results are nonlinear, the above summary is merely {\it speculative}. We do not know exactly how to interpret them, since the present nonlinear theory is incomplete. We only know that boundedness is required. It also seems likely though that scenario 1 and 2 are should be preferred over scenario 3 and 4. The speculations in this section are of course equally valid (or not valid) for the CEEs and IEEs.

\section{Nonlinear energy and entropy stability}\label{numerics}
Consider the extended version (\ref{eq:nonlin_lif}) of (\ref{eq:nonlin}) rewritten (using Einsteins summation convention) for clarity
\begin{equation}\label{eq:stab_eq}
P U_t + (A_i U)_{x_i}+A^T_i U_{x_i}+CU+L_C=0,  \quad t \geq 0,  \quad  \vec x=(x_1,x_2,..,x_k) \in \Omega.
\end{equation}
 Equation (\ref{eq:stab_eq}) is augmented with the initial condition $U(\vec x,0)=F(\vec x)$ in $\Omega$ and boundary conditions of the form (\ref{Gen_BC_form}) on $\delta\Omega$.  Furthermore $A_i=A_i(U)$, $C=C(U)$  and $P$ are  $n \times n$ matrices while $U$ and $L_C$ are $n$ vectors.  $L_C$ is the continuous lifting operator of the form (\ref{Pen_term_Gen_BC_form}) implementing the boundary conditions weakly. 
 A straightforward approximation of (\ref{eq:stab_eq}) on summation-by-parts (SBP) form in $M$ nodes is
\begin{equation}\label{EUL_Disc}
(P \otimes I_M) \vec U_t+{\bf D_{x_i}} {\bf A_i}  \vec U+{\bf A_i^T} {\bf D_{x_i}} \vec U   + {\bf C}  \vec U + {\vec L_D}=0, \quad \vec U(0) = \vec F
\end{equation}
where $\vec U=(\vec U_1^T, \vec U_2^T,...,\vec U_n^T)^T$ include approximations of  $U=(U_1,U_2,...,U_n)^T$ in each node. The discrete lifting operator ${\vec L_D} (\vec U)$ implements the boundary conditions in a similar way to  $L_C(U)$ and $\vec F$ denotes the discrete initial data with the continuous initial data injected in the nodes. The matrix elements of ${\bf A_i},{\bf C}$ are matrices with node values of the matrix elements in $A_i,C$ injected on the diagonals as exemplified below
\begin{equation}
\label{illustration}
A_i=
\begin{pmatrix}
      a_{11}   &  \ldots  & a_{1n} \\
       \vdots   & \ddots & \vdots \\
       a_{n1} &  \ldots  & a_{nn}
\end{pmatrix}, \quad
{\bf A_i} =
\begin{pmatrix}
      {\bf a_{11}}   &  \ldots  &  {\bf a_{1n}}  \\
       \vdots          & \ddots &  \vdots           \\
        {\bf a_{n1}} &  \ldots &  {\bf a_{nn}} 
\end{pmatrix}, \quad
{\bf a_{ij}} =diag(a_{ij}(x_1,y_1), \ldots, a_{ij}(x_M,y_M)).
\end{equation}
Moreover ${\bf D_{x_i}}=I_n \otimes D_{x_i}$ where $\otimes$ denotes the Kronecker product, $I_n$ is the  $n \times n$ identity matrix, $D_{x_i}=P_{\Omega}^{-1} Q_{x_i}$ are SBP difference operators, $P_{\Omega}$ is a positive definite diagonal volume quadrature matrix that defines a scalar product and norm such that  
\begin{equation}
\label{volumenorm}
(\vec U, \vec V)_{\Omega} = \vec U^TP_{\Omega}  \vec V \approx  \int\limits_{\Omega}U^T V d \Omega, \quad \text{and}  \quad  (\vec U, \vec U)_{\Omega} = \| \vec U\|^2_{\Omega}  = \vec U^TP_{\Omega}  \vec U \approx  \int\limits_{\Omega}U^T U d \Omega = \| U\|^2_{\Omega} .
\end{equation}

Following \cite{Lundquist201849} we introduce the discrete normal $\mathbf{N}=(N_1,N_2,...,N_k)$ approximating the continuous normal $\mathbf{n}=(n_1,n_2,...,n_k)$ in the $N$ boundary nodes and a restriction operator $E$ that extracts the boundary values $E\vec U$ from the total values. We also need a positive definite diagonal boundary quadrature $P_{\partial\Omega}=diag(ds_1,ds_2,...,ds_N)$ such that $\oint_{\partial \Omega}  U^T U ds \approx (E \vec U)^T P_{\partial\Omega} (\vec U)= (EU)_i^2 ds_i$. With this notation in place (again using Einsteins summation convention), the SBP constraints for a scalar variable becomes
\begin{equation}
\label{SBP_constraint}
 Q_{x_i}+Q_{x_i}^T=E^T P_{\partial\Omega} N_{i} E, 
\end{equation}
which leads to the scalar summation-by-parts formula mimicking integration-by-parts
\begin{equation}
\label{SBP_scalar_relation}
(\vec U,  D_{x_i}  \vec V) = \vec U^T  P_{\Omega} (D_{x_i}  \vec V) = - ( D_{x_i}  \vec U, \vec V)  + (E \vec U)^T P_{\partial\Omega} N_{i} (E \vec V).
\end{equation}
The scalar SBP relations in (\ref{SBP_constraint}),(\ref{SBP_scalar_relation}), correspond to the SBP formulas for a vector with $n$ variables as
\begin{equation}\label{Multi-SBP}
(\vec U, {\bf D_{x_i}} \vec V)=\vec U^T  (I_n   \otimes P_{\Omega})({\bf D_{x_i}} \vec V)= -({\bf D_{x_i}} \vec U, \vec V) + (E\vec U)^T  (I_n   \otimes P_{\partial\Omega}) N_{i}  (E\vec V).
\end{equation}

It remains to construct the discrete lifting operator $\vec L_D$ (often called the SAT term  \cite{svard2014review,fernandez2014review}) such that we can reuse the continuous analysis. We consider an operator of the form $\vec L_D =(I_n \otimes P_{\Omega})(DC) \vec L_C$.
The transformation matrix $DC$ first extracts the boundary nodes from the volume nodes, secondly permute the dependent variables from being organised as $(E\vec U_1, E\vec U_2,..,E\vec U_n)^T$  to $((E \vec U)_1, (E \vec U)_2,.., (E \vec U)_N)^T$ using the permutation matrix $P_{erm}$ 
and thirdly numerically integrate the resulting vector against the continuous lifting operator $ \vec L_C$ (now applied to the discrete solution).  More specifically we have
\begin{align}
&\vec L_D=(I_n \otimes P_{\Omega})(DC) \vec L_C,&  &\ \ \ DC=(I_n \otimes E^T) (P_{erm}) ^T (P_{\partial\Omega} \otimes I_n),& \label{generic_discrete_BT_lifting_details_1} \\ 
&\vec L_C=diag((L_C)_1,(L_C)_2,...,(L_C)_N),&             &(L_C)_j=(2 (J^- T^{-1})^T  \Sigma (\vec W^- - R \vec W^+ - S^{-1} \vec G))_j.& \label{generic_discrete_BT_lifting_details_2}
\end{align}

We can now prove the semi-discrete correspondence to Proposition \ref{lemma:Matrixrelation}.
\begin{proposition}\label{lemma:Matrixrelation_discrete}
Consider the nonlinear scheme (\ref{EUL_Disc}) with $\vec L_D $ defined in (\ref{generic_discrete_BT_lifting_details_1}) and  (\ref{generic_discrete_BT_lifting_details_2}).

 It is nonlinearly stable for $\vec G=0$ if the relations (\ref{R_condition}) and (\ref{Sigma_condition}) in Lemma \ref{lemma:GenBC} hold and the solution satisfies the estimate 
\begin{equation}\label{1Dprimalstab_disc}
\| \vec U \|_{P  \otimes P_{\Omega}}^2 \leq \| \vec F \|_{P  \otimes P_{\Omega}}^2.
\end{equation}

It is strongly nonlinearly stable for $\vec G \neq 0$ if the relations (\ref{R_condition}),(\ref{S_condition}) and (\ref{Sigma_condition}) in Lemma \ref{lemma:GenBC} hold and the solution satisfies the estimate 
\begin{equation}\label{1Dprimalstab_strong_disc}
\| \vec U \|_{P  \otimes P_{\Omega}}^2 \leq \| \vec F \|_{P  \otimes P_{\Omega}}^2 +  2 \int_0^t  \sum_{j=1,N} \lbrack \vec G^T \vec G) \rbrack_j ds_j \ dt.
\end{equation}
In (\ref{1Dprimalstab_disc}) and (\ref{1Dprimalstab_strong_disc}), $\vec F$ and $\vec G$ are external data from $F$ and $G$ injected in the nodes. 
\end{proposition}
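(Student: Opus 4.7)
The plan is to mimic the continuous energy argument that underlies Proposition \ref{lemma:Matrixrelation}, using the SBP relation (\ref{Multi-SBP}) as the discrete substitute for integration-by-parts, and then to apply Lemma \ref{lemma:GenBC} pointwise at each boundary node. First I would multiply the scheme (\ref{EUL_Disc}) from the left by $\vec U^{T}(P\otimes P_{\Omega})$ and add the transpose. The time-derivative term produces $\tfrac{1}{2}\tfrac{d}{dt}\|\vec U\|^{2}_{P\otimes P_{\Omega}}$ since $P$ is symmetric and $P_{\Omega}$ is a diagonal positive-definite quadrature. The contribution of $\mathbf{C}\vec U$ vanishes because $C+C^{T}=0$ carries over to the block-diagonal structure in (\ref{illustration}), so $\vec U^{T}(I_{n}\otimes P_{\Omega})\mathbf{C}\vec U=\tfrac{1}{2}\vec U^{T}(I_{n}\otimes P_{\Omega})(\mathbf{C}+\mathbf{C}^{T})\vec U=0$.

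Next I would attack the two advective terms $\mathbf{D}_{x_{i}}\mathbf{A}_{i}\vec U$ and $\mathbf{A}_{i}^{T}\mathbf{D}_{x_{i}}\vec U$ using the vectorial SBP identity (\ref{Multi-SBP}). Because $(I_{n}\otimes P_{\Omega})$ is block diagonal and commutes with the block-diagonal matrices $\mathbf{A}_{i}$, the volume pieces $({\mathbf{D}}_{x_{i}}\vec U)^{T}(I_{n}\otimes P_{\Omega})\mathbf{A}_{i}\vec U$ and $(\mathbf{A}_{i}\vec U)^{T}(I_{n}\otimes P_{\Omega})\mathbf{D}_{x_{i}}\vec U$ are transposes of each other and cancel exactly — the discrete analogue of the cancellation in (\ref{eq:boundaryPart1}). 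What remains is a pure boundary contribution of the form
\begin{equation}\label{disc_bdry_form}
\text{BT} \;=\; (E\vec U)^{T}(I_{n}\otimes P_{\partial\Omega})N_{i}(E\mathbf{A}_{i}\vec U) \;=\; \sum_{j=1}^{N}\bigl[(EU)_{j}^{T}(n_{i}A_{i})_{j}(EU)_{j}\bigr]\,ds_{j},
\end{equation}
where the sum is over boundary nodes and each term is evaluated from pointwise nodal values, exactly mirroring the continuous boundary integral in (\ref{1Dprimalstab_trans}).

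Then I would account for the lifting operator. The construction (\ref{generic_discrete_BT_lifting_details_1})--(\ref{generic_discrete_BT_lifting_details_2}) is designed precisely so that $\vec U^{T}(I_{n}\otimes P_{\Omega})\vec L_{D}$ collapses to the weighted nodal sum $\sum_{j}\bigl[2(EU)_{j}^{T}(J^{-}T^{-1})^{T}\Sigma(\vec W^{-}-R\vec W^{+}-S^{-1}\vec G)\bigr]_{j}\,ds_{j}$, that is, an $ds_{j}$-weighted evaluation of the continuous penalty terms appearing in (\ref{Pen_term_Gen_BC_form}) at each boundary node. Combined with (\ref{disc_bdry_form}), the total contribution to the energy rate at node $j$ is exactly the expression already analysed in the proof of Lemma \ref{lemma:GenBC}. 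Hence, with the conditions (\ref{R_condition}) and (\ref{Sigma_condition}) imposed pointwise, the node-$j$ contribution is non-positive for $\vec G=0$, yielding $\tfrac{d}{dt}\|\vec U\|^{2}_{P\otimes P_{\Omega}}\leq 0$ and (\ref{1Dprimalstab_disc}) after time integration. If moreover (\ref{S_condition}) holds, the manipulations leading to (\ref{final _gen_weak_result_3}) give a node-$j$ contribution bounded below by $-2\bigl[\vec G^{T}\vec G\bigr]_{j}\,ds_{j}$, and summing over $j$ and integrating in time produces (\ref{1Dprimalstab_strong_disc}).

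The main obstacle, and the place where the argument must be executed carefully rather than sketched, is the bookkeeping for step (\ref{disc_bdry_form}) and for the lifting contribution: one must verify that the permutation $P_{\text{erm}}$, the Kronecker structure $I_{n}\otimes P_{\partial\Omega}$, and the restriction $E$ compose so that the discrete boundary sum is \emph{exactly} a node-wise quadrature of the continuous boundary quadratic form plus lifting, with no cross-node coupling. Once this alignment is established, Lemma \ref{lemma:GenBC} applies separately at each node and the proof is finished by the same algebraic identities used in the continuous setting.
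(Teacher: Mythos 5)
Your proposal is correct and follows essentially the same route as the paper's proof: the discrete energy method, cancellation of the volume terms via the SBP relation (\ref{Multi-SBP}) and the commutation of $(I_n\otimes P_{\Omega})$ with the block-diagonal $\mathbf{A}_i$, reduction of the boundary and lifting contributions to decoupled node-wise sums weighted by $ds_j$, and a pointwise application of Lemma \ref{lemma:GenBC}. The only slip is cosmetic: the correct left multiplier is $\vec U^{T}(I_n\otimes P_{\Omega})$ rather than $\vec U^{T}(P\otimes P_{\Omega})$, since the factor $P$ already sits in front of $\vec U_t$ in (\ref{EUL_Disc}); your stated outcome $\tfrac{1}{2}\tfrac{d}{dt}\|\vec U\|^{2}_{P\otimes P_{\Omega}}$ is nonetheless the right one.
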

\begin{proof}
The discrete energy method (multiply (\ref{EUL_Disc}) from the left with  $\vec U^T (I_n \otimes P_{\Omega}$) yields 
\begin{equation}\label{Disc_energy_initial}
\vec U^T (P  \otimes P_{\Omega})  \vec U_t+ (\vec U,  {\bf D_{x_i}} {\bf A_i}  \vec U)+  ({\bf A_i} \vec U, {\bf D_{x_i}} \vec U)+(\vec U,{\vec L_D)}=0,                                                       
\end{equation}
where we have used that $(I_n \otimes P_{\Omega})$ commutes with ${\bf A_i}$ (since the matrices have diagonal blocks) and that the symmetric part of  $C$ is zero. The SBP constraints (\ref{Multi-SBP}) and the notation $\vec U^T (P  \otimes P_{\Omega})  \vec U= \| \vec U \|_{P  \otimes P_{\Omega}}^2 $ simplifies (\ref{Disc_energy_initial}) to
\begin{equation}\label{Disc_energy_final}
\dfrac{1}{2} \dfrac{d}{dt} \| \vec U \|_{P  \otimes P_{\Omega}}^2 
+ \vec U^T (I_n  \otimes E^T P_{\partial\Omega} N_{i} E )  {\bf A_i} \vec U  + (\vec U,{\vec L_D}) =0.
\end{equation}

The semi-discrete energy rate in (\ref{Disc_energy_final}) mimics the continuous energy rate in the sense that only boundary terms remain. To make use of the already performed continuous energy analysis, we expand the boundary terms and exploit the diagonal form of  $P_{\partial\Omega}$. The result is
\begin{equation}\label{Disc_energy_final_BT}
\vec U^T (I_n  \otimes E^T P_{\partial\Omega} N_{x_i} E )  {\bf A_i} \vec U = \sum_{j=1,N} \lbrack  (E  \vec U)^T (N_{i}  {\bf A_i})  (E  \vec U) \rbrack_j ds_j.
\end{equation}
The relation (\ref{Disc_energy_final_BT}) mimics the continuous result (\ref{1Dprimalstab_trans}) in each of the  $N$ boundary nodes. Next, the continuous transformation formula applied to the discrete solution yields $W_i = (T^{-1}(E \vec U))_i $ and hence
\begin{equation}\label{discrete_BT_diagonal}
\sum_{j=1,N}  \lbrack  (E  \vec U)^T (N_{i}  {\bf A_i})  (E  \vec U) \rbrack_j ds_j = \sum_{j=1,N}  \lbrack \vec W^T   \Lambda   \vec W \rbrack_jds_j.
\end{equation}
The  discrete boundary terms (\ref{discrete_BT_diagonal}) now have the same form as the continuous ones in (\ref{1Dprimalstab_trans_final}). By using  (\ref{generic_discrete_BT_lifting_details_1}) and (\ref{generic_discrete_BT_lifting_details_2}) we find that
\begin{equation}\label{discrete_BT_lifting}
(\vec U,{\vec L_D})= \sum_{j=1,N}  \lbrack 2  (\vec W^-)^T  \Sigma(\vec W^- - R \vec W^+ - S^{-1} \vec G) \rbrack_j ds_j.
\end{equation}
The combination of  (\ref{Disc_energy_final})-(\ref{discrete_BT_lifting}) leads to the final form of the energy rate
\begin{equation}\label{Disc_energy_final_final}
\dfrac{1}{2} \dfrac{d}{dt} \| \vec U \|_{P  \otimes P_{\Omega}}^2 + \sum_{j=1,N}  \lbrack  \vec W^T   \Lambda \vec W + 2  (\vec W^-)^T  \Sigma(\vec W^- - R \vec W^+ - S^{-1} \vec G) \rbrack_j ds_j=0.
\end{equation}
By using (\ref{R_condition})-(\ref{Sigma_condition}) in Lemma \ref{lemma:GenBC}, the estimates (\ref{1Dprimalstab_disc}) and (\ref{1Dprimalstab_strong_disc}) follow by using the same technique that was used for the continuous estimates in the proof of Lemma \ref{lemma:GenBC}.
\end{proof}

\section{Summary}\label{sec:conclusion}
In this paper we have completed the general stability theory for nonlinear skew-symmetric hyperbolic problems partly developed in \cite{nordstrom2022linear-nonlinear,Nordstrom2022_Skew_Euler}, by adding the analysis of nonlinear boundary conditions. In  \cite{nordstrom2022linear-nonlinear, Nordstrom2022_Skew_Euler} we focused on the skew-symmetric property {\it assuming}  that boundary conditions leading to an energy bound were available. In this article we derive these boundary conditions explicitly, and show how to implement them in a provable stable way using summation-by-parts formulations and weak boundary procedures. We exemplify the general procedure for the most important equations in computational fluid dynamics: the shallow water equations, the incompressible Euler equations and the compressible Euler equations.



\section*{Acknowledgments}

Jan Nordstr\"{o}m was supported by Vetenskapsr{\aa}det, Sweden [award no.~2018-05084 VR and 2021-05484 VR] and the Swedish e-Science Research Center (SeRC).

\bibliographystyle{elsarticle-num}
\bibliography{References_Jan,References_andrew,References_Fredrik}

\end{document}